\newcommand{\mna}[1]{{\mathcal{#1}}}
\newcommand{\onum}[1]{\mbox{$\overline{{#1}}$}} 
\newcommand{\unum}[1]{\mbox{$\underline{{#1}}$}}
\newcommand{\R}[0]{{\mathbb{R}}}
\def\eps{{\varepsilon}}
\newcommand{\mmid}[0]{;\,}		
\newcommand{\st}[0]{{\ \ \mbox{subject to}\ \ }}
\def\dmu{DMU${}_0$\xspace}
\def\nref#1{\mbox{(\ref{#1})}}
\newtheorem{theorem}{Theorem}
\newtheorem{proposition}{Proposition}
\theoremstyle{definition}
\newtheorem{example}{Example}
\begin{document}

\title{A novel data envelopment analysis ranking based on a robust approach}

\author{
  Milan Hlad\'{i}k\footnote{
Charles University, Faculty  of  Mathematics  and  Physics,
Department of Applied Mathematics, 
Malostransk\'e n\'am.~25, 11800, Prague, Czech Republic, 
e-mail: \texttt{milan.hladik@matfyz.cz}
}
}


\date{\today}
\maketitle

\begin{abstract}
We propose a novel DEA ranking based on a robust optimization viewpoint: the higher ranking for those DMU's that remain efficient even for larger simultaneous and independent variations of all data and vice versa. This ranking can be computed by solving generalized linear fractional programming problems, but we also present a tight linear programming approximation that preserves the order of rankings. We show many remarkable properties of our approach: It preserves the order of rankings compared to the classical approach, and it is unit invariant. It is naturally normalized, so it can be used as universal ranking of DMU's of unrelated models. It gives ranking not only for inefficient, but also for efficient decision making units. It can also be easily extended to generalized or alternative models, for instance to deal with interval data. We present several examples confirming the desirable properties of the method.
\end{abstract}

\textbf{Keywords:}\textit{ Data envelopment analysis, robustness, interval analysis, linear programming.}

\section{Introduction}

Data envelopment analysis (DEA) \cite{CooSei2007,Zhu2016} is a method for evaluating the performance of a group of decision making units (DMU). The basic DEA model measures the DMU's such that it finds the most convenient weights of inputs and outputs factors such that the relative efficiency is maximal. Here, the relative efficiency is expressed as the weighted sum of outputs divided by the weighted sum of inputs .
The classical CCR model \cite{CharCoo1978} for ranking the decision making unit $0$ (denoted by \dmu) can be formulated as a linear program
\begin{align}\label{lpDEA}
\max\ y_0^Tu \st x_0^Tv \leq 1,\ Yu-Xv\leq0,\ u,v\geq0,
\end{align}
where 
\begin{itemize}\addtolength{\itemsep}{-0.3\baselineskip}
\item
$x_0\in\R^{n_1}$ is the input nonnegative vector for \dmu,
\item
$y_0\in\R^{n_2}$ is the output nonnegative vector for \dmu,
\item
$X\in\R^{m\times n_1}$ is the input nonnegative matrix for the other DMU's, in particular, the $i$th row of $X$ is the input vector for the $i$th DMU,
\item
$Y\in\R^{m\times n_2}$ is the output nonnegative matrix for the other DMU's, in particular, the $i$th row of $Y$ is the output vector for the $i$th DMU,
\item
$u$ and $v$ are vectors of variables representing output and input weights, respectively.
\end{itemize}

We will consider this model even though other models exist \cite{CooSei2007} as well as more economic compact formulations \cite{JahSol2008}. However, we will show (Section~\ref{ssBCC}) that our approach is easily extended to alternative models.

The aim of this paper is to bring a new efficiency ranking based on a robustness point of view. This ranking could be used not only for comparing individual DMU's (as the classical DEA method), but also for measuring stability and distances to efficiency or inefficiency.

There were already presented several robust optimization methods for DEA; see \cite{Alt2013,HafHaj2015,KavAbb2015,Lu2015,ShoHat2010,ShoSha2014,WanWei2010} and see also related sensitivity and stability developments in \cite{CooLi2001,SeiZhu1998}. 
Sensitivity analysis in DEA was primarily focused on perturbation of one DMU. In particular, a linear programming method to compute Chebyshev and 1-norm stability radius is proposed in \cite{CharHaa1992} for an additive model and in \cite{CharRou1996} for a ratio (CCR) model.

The above results mostly utilize robust optimization approach to deal with imprecise data, or study sensitivity w.r.t.\ data variations.
Our approach is substantially different -- we employ the robustness idea to propose a new ranking for DEA, which may or may not contain uncertain data, such that it measures relative efficiencies and their robustness in one. Similar idea was also discussed in \cite{RouSem1995b}, but data of only one DMU were considered. In contrast, we employ simultaneously all data together. Another ranking of DMU's based on robustness was proposed in \cite{RouSem1995}, but their motivation came up from game theory, and the ranking has another interpretation.

\section{Robust approach}

The underlying idea of the robust approach is to determine the largest allowable variations of all input and output data such that \dmu remains efficient (for efficient DMU's) or the smallest possible variation of the input and output data such that \dmu becomes efficient (for inefficient DMU's). The corresponding coefficient of variations gives us a new ranking based on a robustness viewpoint.  In other words, it can we viewed as a distance to the nearest inefficient point (for efficient DMU's) and vice verse. We choose a Chebyshev-like norm because it has a very simple and useful interpretation; this is analogous to the interpretation of the tolerance approach to sensitivity analysis in linear programming developed by Wendell \cite{Wen1985,Wen2004}.

Formally, define the new ranking $r$ as follows.
We use the relative $\delta$-neighborhood of the data
\begin{align*}
\mna{O}_{\delta}(x_0,y_0,X,Y)
=\{(x'_0,y'_0,X',Y')\mmid |x'_{ij}-x_{ij}|\leq\delta x_{ij},\ 
|y'_{ik}-y_{ik}|\leq\delta y_{ik},\ \forall i=0,1,\dots,m,\,\forall j,k\}.
\end{align*}
If \dmu is efficient, then its ranking is defined as $r=1+\delta^*$, where
\begin{align*}
\delta^*=\max\ \delta \st \mbox{\dmu is efficient for all data }
  (x'_0,y'_0,X',Y')\in\mna{O}_{\delta}(x_0,y_0,X,Y).
\end{align*}
If \dmu is inefficient, then its ranking is defined as $r=1+\delta^*$, where
\begin{align*}
\delta^*=-\min\ \delta \st \mbox{\dmu is efficient for some data }
  (x'_0,y'_0,X',Y')\in\mna{O}_{\delta}(x_0,y_0,X,Y).
\end{align*}
Notice that in the above optimization problems, the maximum or minimum value needn't be attained. It that case, we use supremum or infimum instead.

This ranking can be computed by only one optimization problem.

\begin{theorem}\label{thmRobRank}
We have
\begin{subequations}\label{maxThmRobRankHalf}
\begin{align}
\delta^*=\max\ \delta \st 
&(1-\delta)y_0^Tu\geq1,\ (1+\delta)x_0^Tv \leq 1,\\ 
&(1+\delta)Yu-(1-\delta)Xv\leq0,\ u,v\geq0.
\end{align}
\end{subequations}
\end{theorem}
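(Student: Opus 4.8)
The plan is to reduce the ostensibly $\forall\exists$ robustness conditions defining $\delta^*$ to the feasibility of a single linear system, by exploiting that an efficiency certificate for \dmu is monotone in the data. First I would fix the working characterization of efficiency: for any data $(x'_0,y'_0,X',Y')$, \dmu is efficient precisely when there exist weights $u,v\geq0$ with $(y'_0)^Tu\geq1$, $(x'_0)^Tv\leq1$ and $Y'u-X'v\leq0$ (equivalently, the optimal value of \nref{lpDEA} for these data is at least one). The entire argument then hinges on the elementary observation that, for fixed $u,v\geq0$, each of these three inequalities is monotone in the corresponding data entries, so that among all data in $\mna{O}_{\delta}$ a single extremal point is decisive.

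For efficient \dmu I would single out the worst-case datum of $\mna{O}_{\delta}$, namely $((1+\delta)x_0,(1-\delta)y_0,(1-\delta)X,(1+\delta)Y)$, which shrinks the outputs and inflates the inputs of \dmu while doing the reverse to its competitors. The key step is the equivalence between ``\dmu is efficient for all data in $\mna{O}_{\delta}$'' and ``\dmu is efficient at this worst-case datum''. One direction is immediate since the worst-case point belongs to $\mna{O}_{\delta}$. The converse is the genuinely nontrivial part, because efficiency for all data is a $\forall\exists$ statement whose certifying weights might a priori depend on the datum; here I would take weights $u,v\geq0$ certifying efficiency at the worst-case point and check, using $u,v\geq0$ together with the coordinatewise bounds defining $\mna{O}_{\delta}$, that the very same $u,v$ certify efficiency at every admissible datum. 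Inserting the worst-case values into the three certificate inequalities yields exactly $(1-\delta)y_0^Tu\geq1$, $(1+\delta)x_0^Tv\leq1$ and $(1+\delta)Yu-(1-\delta)Xv\leq0$, so the largest feasible $\delta$ equals the optimum of \nref{maxThmRobRankHalf}.

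For inefficient \dmu I would run the mirror-image argument with the best-case datum $((1-\delta)x_0,(1+\delta)y_0,(1+\delta)X,(1-\delta)Y)$, which aids \dmu as much as the neighborhood allows; then ``\dmu is efficient for some data in $\mna{O}_{\delta}$'' collapses to efficiency at this single point, i.e.\ feasibility of $(1+\delta)y_0^Tu\geq1$, $(1-\delta)x_0^Tv\leq1$, $(1-\delta)Yu-(1+\delta)Xv\leq0$, $u,v\geq0$, and $-\delta^*$ is the least such $\delta$. Finally I would merge the two cases by the substitution $\delta\mapsto-\delta$, which maps this best-case system exactly onto the worst-case system, so that minimizing $\delta$ in the inefficient regime coincides with maximizing $-\delta$ in \nref{maxThmRobRankHalf}; thus the single program \nref{maxThmRobRankHalf} computes $\delta^*$ in both regimes, with a nonnegative optimum for efficient and a negative one for inefficient units. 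I expect the monotone uniform-witness step of the efficient case to be the only real obstacle, the remainder being careful bookkeeping of the sign conventions.
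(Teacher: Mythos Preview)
Your proposal is correct and follows essentially the same route as the paper: identify the extremal (worst-case, resp.\ best-case) datum in $\mna{O}_{\delta}$, use monotonicity of the efficiency certificate in the data to reduce the $\forall$/$\exists$ quantifier to that single point, and unify the two regimes via $\delta\mapsto-\delta$. If anything, you are more explicit than the paper about why a single pair $(u,v)$ certifies efficiency uniformly over $\mna{O}_{\delta}$, which the paper dismisses with ``Obviously''.
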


\begin{proof}
If \dmu is efficient for all data $(x'_0,y'_0,X',Y')\in\mna{O}_{\delta}(x_0,y_0,X,Y)$, then it is also efficient for $x'_0:=(1+\delta)x_0$, $y'_0:=(1-\delta)y_0$, $X':=(1-\delta)X$ and $Y':=(1+\delta)Y$. Hence the problem \nref{maxThmRobRankHalf} is feasible. Conversely, if \nref{maxThmRobRankHalf} is feasible, then \dmu is efficient for $x'_0:=(1+\delta)x_0$, $y'_0:=(1-\delta)y_0$, $X':=(1-\delta)X$ and $Y':=(1+\delta)Y$. Obviously, \dmu is efficient for any $x'_0\leq(1+\delta)x_0$, $y'_0\geq(1-\delta)y_0$, $X'\geq(1-\delta)X$ and $Y'\leq(1+\delta)Y$. Therefore, \dmu is efficient for all $\delta$-perturbation of data.

Suppose now that \dmu is inefficient. If \dmu becomes efficient for some $\delta'$-perturbation $(x'_0,y'_0,X',Y')\in\mna{O}_{\delta'}(x_0,y_0,X,Y)$, then $(1-\delta')x_0\leq x'_0$, $(1+\delta')y_0\geq y'_0$, $(1+\delta')X\geq X'$ and $(1-\delta')Y\leq Y'$, and hence $\delta:=-\delta'$ is feasible in  \nref{maxThmRobRankHalf}. This gives us the lower bound $\delta^*\geq-\delta'$.
If \dmu is efficient for no $\delta'$-perturbation $(x'_0,y'_0,X',Y')\in\mna{O}_{\delta'}(x_0,y_0,X,Y)$, then \nref{maxThmRobRankHalf} cannot be feasible for $\delta:=-\delta'$, whence $\delta^*\leq-\delta'$.
\end{proof}

The optimization problem \nref{maxThmRobRankHalf} is a nonlinear programming problem, but it can be solved effectively in polynomial time by a suitable  interior point method. This is because the problem belongs to the class of generalized linear fractional programming problems, which have the form of
\begin{align*}
\min\ \lambda \st
Ax\leq\lambda Bx,\ 
Cx\leq c,\ x\geq 0,
\end{align*}
where $Bx\geq0$ holds for all $x$ satisfying $Cx\leq c$, $x\geq 0$. Such problems are solvable in polynomial time; see \cite{FreJar1995, NesNem1995}.
This class is called generalized linear fractional linear programming because the problem can be equivalently expressed as
\begin{align*}
\min\ \left(\max_i \frac{(Ax)_i}{(Bx)_i}\right)  \st
Cx\leq c,\ x\geq 0.
\end{align*}

The classical DEA ranking is computed by means of linear programming by solving \nref{lpDEA}. Even though \nref{maxThmRobRankHalf} is effectively solvable, it is desirable to have a linear programming model, too, in order that the computing techniques and model class are the same. That is why we now focus on a linear programming approximation of \nref{maxThmRobRankHalf}.

Substituting $\tilde{u}:=u/(1-\delta)$ and $\tilde{v}:=v/(1+\delta)$ in \nref{maxThmRobRankHalf}, we obtain
\begin{subequations}\label{maxRobRankApprNonlin}
\begin{align}
\delta^*=\max\ \delta \st 
&y_0^T\tilde{u}\geq(1-\delta)^{-2},\ x_0^T\tilde{v} \leq (1+\delta)^{-2},\\ 
&Y\tilde{u}-X\tilde{v}\leq0,\ \tilde{u},\tilde{v}\geq0.
\end{align}
\end{subequations}
Using the linear approximation of the nonlinear terms around $\delta=0$ as follows
\begin{align*}
(1-\delta)^{-2}\approx 1+2\delta,\quad
(1+\delta)^{-2}\approx 1-2\delta,
\end{align*}
we arrive at the linear programming approximation
\begin{subequations}\label{maxRobRankApprDouble}
\begin{align}
\label{maxRobRankApprDoubleA}
\max\ \delta \st 
&y_0^T\tilde{u}\geq1+2\delta,\ x_0^T\tilde{v} \leq 1-2\delta,\\ 
\label{maxRobRankApprDoubleB}
&Y\tilde{u}-X\tilde{v}\leq0,\ \tilde{u},\tilde{v}\geq0.
\end{align}
\end{subequations}
Rescaling the coefficient $\delta$ by the factor $1/2$, we get
\begin{subequations}\label{maxRobRankApprHalf}
\begin{align}
\delta^*=\frac{1}{2}\max\ \delta \st 
&y_0^T\tilde{u}\geq1+\delta,\ x_0^T\tilde{v} \leq 1-\delta,\\ 
&Y\tilde{u}-X\tilde{v}\leq0,\ \tilde{u},\tilde{v}\geq0.
\end{align}
\end{subequations}

\paragraph{The resulting ranking.}
To avoid the division by 2 in \nref{maxRobRankAppr} and for better scalability, we propose the following DEA ranking. \dmu has the ranking $r=1+\delta^*$, where $\delta^*$ is the optimal solution of the linear program
\begin{subequations}\label{maxRobRankAppr}
\begin{align}
\label{maxRobRankApprA}
\delta^*=\max\ \delta \st 
&y_0^T\tilde{u}\geq1+\delta,\ x_0^T\tilde{v} \leq 1-\delta,\\ 
\label{maxRobRankApprB}
&Y\tilde{u}-X\tilde{v}\leq0,\ \tilde{u},\tilde{v}\geq0.
\end{align}
\end{subequations}
We will also consider for comparison the precise nonlinear model from Theorem~\ref{thmRobRank}
\begin{subequations}\label{maxThmRobRank}
\begin{align}
\label{maxThmRobRankA}
\delta^*=2\max\ \delta \st 
&(1-\delta)y_0^Tu\geq1,\ (1+\delta)x_0^Tv \leq 1,\\ 
\label{maxThmRobRankB}
&(1+\delta)Yu-(1-\delta)Xv\leq0,\ u,v\geq0.
\end{align}
\end{subequations}

\subsection{Fixed data}\label{ssRobApprFixed}

Sometimes it happens that certain part of data is inherently fixed and thus variation of all data is not meaningful (cf.\ Example~\ref{exABC}). Nevertheless, our approach is easily adapted for this case. The decision on efficiency of DMU's remains the same, only the value of the ranking will be changed -- increased for efficient DMU's and decreased for inefficient ones. For illustration suppose that the input data $X$ are fixed.

The model is directly adapted to this form, and the precise nonlinear model \nref{maxThmRobRank} is modified as
\begin{align*}
\delta^*=2\max\ \delta \st 
&(1-\delta)y_0^Tu\geq1,\ x_0^Tv\leq 1,\\ 
&(1+\delta)Yu-Xv\leq0,\ u,v\geq0.
\end{align*}
This model has again a form of  generalized linear fractional linear programming, and therefore efficiently solvable.

An approximate linear model can be derived as follows. Substitute $\tilde{u}:=u$ and $\tilde{v}:=v/(1+\delta)$ to obtain
\begin{align*}
2\max\ \delta \st 
&y_0^T\tilde{u}\geq(1-\delta)^{-1},\ 
x_0^T\tilde{v} \leq (1+\delta)^{-1},\\ 
&Y\tilde{u}-X\tilde{v}\leq0,\ \tilde{u},\tilde{v}\geq0.
\end{align*}
Linearizing nonlinear terms around zero, we get a linear model
\begin{align*}
2\max\ \delta \st 
&y_0^T\tilde{u}\geq 1+\delta,\ 
x_0^T\tilde{v} \leq 1-\delta,\\ 
&Y\tilde{u}-X\tilde{v}\leq0,\ \tilde{u},\tilde{v}\geq0.
\end{align*}
This is exactly the double of the optimal value of \nref{maxRobRankAppr}.

\section{Properties of the robust approach ranking}

The proposed ranking and the model \nref{maxRobRankAppr} have many interesting and desirable properties. Besides the fact that it is based on how DMU's are robustly stable in their (in)efficiency, we further have the following properties. 
Notice that as for the classical DEA ranking (the so called Units Invariance Theorem \cite{CooSei2007}), $r$ is invariant to scaling the units of input and output data.
Since $r$ can be larger than $1$, it is related to the notion of the so called super-efficiency pioneered by Andersen and Petersen \cite{AndPet1993}. Their approach is, however, not units invariant \cite{CooSei2007}.

\subsection{Basic properties}\label{ssRobBasic}

The following theorem states some basic properties of the proposed ranking based on \nref{maxRobRankAppr}. Notice that for the nonlinear model \nref{maxThmRobRank} we have $r\in[-1,3]$, but the conditions (2) and (3) of Proposition~\ref{propCompEffIneff} hold as well.

\begin{proposition}\label{propCompEffIneff}
We have 
\begin{enumerate}[(1)]\addtolength{\itemsep}{-0.3\baselineskip}
\item
$r\in[0,2]$, 
\item
$r\geq1$ if and only if \dmu is efficient,
\item
$r<1$ if and only if \dmu is inefficient.
\end{enumerate}
\end{proposition}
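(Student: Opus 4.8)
The plan is to reduce all three claims to the behaviour of the linear program \nref{maxRobRankAppr} at the single value $\delta=0$, where it coincides with the classical efficiency test \nref{lpDEA}; the two endpoints of the admissible range of $\delta$ will then follow from the sign constraints $\tilde u,\tilde v\ge 0$ together with the nonnegativity of $x_0,y_0$. The starting point is a monotonicity fact: if $(\delta,\tilde u,\tilde v)$ is feasible in \nref{maxRobRankAppr}, then $(\delta',\tilde u,\tilde v)$ is feasible for every $\delta'\le\delta$, because lowering $\delta$ only relaxes the constraints $y_0^T\tilde u\ge 1+\delta$ and $x_0^T\tilde v\le 1-\delta$, whereas \nref{maxRobRankApprB} does not involve $\delta$. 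Hence the set of feasible $\delta$ is a downward-closed half-line $(-\infty,\delta^*]$, so $\delta^*\ge 0$ holds if and only if $\delta=0$ is itself feasible.

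For the bounds in item (1) I would argue directly. Every feasible point satisfies $0\le x_0^T\tilde v\le 1-\delta$, using $x_0\ge 0$ and $\tilde v\ge 0$; this forces $\delta\le 1$ and hence $\delta^*\le 1$, that is $r\le 2$. For the lower bound, the trivial choice $\tilde u=\tilde v=0$ is feasible at $\delta=-1$, because the four constraint groups reduce to $0\ge 0$, $0\le 2$, $0\le 0$ and $0\ge 0$; therefore $\delta^*\ge -1$ and $r\ge 0$. Since the feasible region is nonempty and the objective is bounded above by $1$, the linear program attains its optimum, so $r$ is well defined and lies in $[0,2]$.

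For items (2) and (3) I would identify the $\delta=0$ slice of \nref{maxRobRankAppr} with the classical model. At $\delta=0$ the constraints reduce to the existence of $\tilde u,\tilde v\ge 0$ with $y_0^T\tilde u\ge 1$, $x_0^T\tilde v\le 1$ and $Y\tilde u-X\tilde v\le 0$, which says precisely that the optimal value of \nref{lpDEA} is at least $1$, i.e.\ that \dmu is efficient. Combining this with the monotonicity step gives $r\ge 1\iff\delta^*\ge 0$, and by monotonicity $\delta^*\ge 0$ holds exactly when $\delta=0$ is feasible, hence exactly when \dmu is efficient; this proves item (2). Item (3) is then the logical complement, since $r\ge 1$ versus $r<1$ and efficiency versus inefficiency are complementary dichotomies.

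I expect the only step carrying real content to be this last identification, namely making precise that feasibility of \nref{maxRobRankAppr} at $\delta=0$ is equivalent to efficiency of \dmu in the CCR sense of \nref{lpDEA}; this is essentially the normalization already exploited in the proof of Theorem~\ref{thmRobRank}, where setting $\delta=0$ reproduces the unperturbed model. The bounds in (1) and the monotonicity are immediate consequences of data nonnegativity and of the placement of $\delta$ in the constraints, so no delicate estimate is needed.
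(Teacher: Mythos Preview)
Your argument is correct and follows the same overall strategy as the paper: the bounds in (1) come from the trivial feasible point $(\delta,\tilde u,\tilde v)=(-1,0,0)$ and from $0\le x_0^T\tilde v\le 1-\delta$, and the equivalences in (2)--(3) hinge on identifying the $\delta=0$ slice of \nref{maxRobRankAppr} with the CCR efficiency condition.

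The one place where you diverge is the implication ``\dmu inefficient $\Rightarrow r<1$''. The paper argues by a limit/compactness step: assuming the $\delta=0$ system is infeasible, it shows that a slightly perturbed system is still infeasible by taking accumulation points of hypothetical solutions. You replace this by the cleaner observation that feasibility in \nref{maxRobRankAppr} is downward-closed in $\delta$, together with the fact that a feasible, bounded linear program attains its optimum; hence infeasibility at $\delta=0$ forces $\delta^*<0$ directly. Your route avoids the compactness argument altogether and makes the role of linear programming attainment explicit, which is a modest but genuine simplification; the paper's version, on the other hand, does not rely on knowing in advance that the optimum is attained.
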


\begin{proof}\mbox{}
\begin{enumerate}[(1)]
\item
We have $r\geq0$ since $\delta=-1$, $\tilde{u}=0$ and $\tilde{v}=0$ is feasible for \nref{maxRobRankAppr}.
We have $r\leq2$ since $\delta>1$ cannot be feasible for \nref{maxRobRankAppr} due to the constraint $x_0^T\tilde{v} \leq 1-\delta$, which would turn to $x_0^T\tilde{v}<0$.
\item
If \dmu is efficient, then $\delta=0$ is feasible for \nref{maxRobRankAppr}, and hence $r\geq1$.

If \dmu is inefficient, then the linear system
\begin{align}\label{sysPfPropPropEff}
y_0^T{u}\geq1,\ x_0^T{v} \leq 1,\  
Y{u}-X{v}\leq0,\ {u},{v}\geq0
\end{align}
is infeasible. We claim that
\begin{align}\label{sysPfPropPropEffEps}
y_0^T{u}\geq1+\eps,\ x_0^T{v} \leq 1-\eps,\  
Y{u}-X{v}\leq0,\ {u},{v}\geq0
\end{align}
it remains infeasible for some sufficiently small $\eps>0$; this gives $r<1$. The claim is not hard to see since if it is not the case, then for any $\eps>0$ the system \nref{sysPfPropPropEff} has a solution $u_{\eps},v_{\eps}$. The convex polyhedron described by \nref{sysPfPropPropEffEps} is inclusion monotonic with respect to $\eps>0$. Thus, we can restrict to a bounded region, and as $\eps\to0$, the sequence of points $u_{\eps},v_{\eps}$ has an accumulation point $u,v$, which solves \nref{sysPfPropPropEff}; a contradiction.
\item
It follows from the previous point.
\qedhere
\end{enumerate}
\end{proof}

\subsection{Robustness interpretation}

We see that the novel ranking does not change (in)efficiency of the classical ranking. Moreover, we can use $r$ as the measure of inefficiency and efficiency. For efficiency in particular, such a measure was not much studied and the efficient DMU's were just ranked by the value of 1. However, the novel ranking $r$ is more delicate and says how much the efficient DMU's are close to inefficiency. More specifically, if $r=1+\delta^*\geq1$, then \dmu is efficient for any variation of the data up to $50\delta^*\%$ of their nominal values; moreover, all data coefficients may vary simultaneously and independently to each other.

\begin{example}\label{exABC}
Suppose that the outputs of three DMU's $A$, $B$ and $C$ are
$$
A:(2,4),\ \ 
B:(3,3),\ \ 
C:(4,2).
$$
\begin{center}
\psset{unit=4.8ex,arrowscale=1.5}
\begin{pspicture}(-0.4,-0.4)(5.8,5.5)
\newgray{mygray}{0.9}
\psaxes[ticksize=2pt,labels=all,ticks=all]
{->}(0,0)(-0.2,-0.2)(5.8,5)
\uput[-90](5.8,0){$y_1$}
\uput[180](0,5.){$y_2$}
\qdisk(2,4){2pt}\uput[45](2,4){$A$}
\qdisk(3,3){2pt}\uput[45](3,3){$B$}
\qdisk(4,2){2pt}\uput[45](4,2){$C$}
\end{pspicture}
\end{center}
According to the classical methods, all are considered as efficient with ranking 1. However, a slight change of the outputs of $B$ can change it and make $B$ inefficient. Our robust approach reflects it and ranks the DMU's by $1.1429$, $1$ and $1.1429$. This means that $B$ is on the border between efficiency and inefficiency, but still considered as efficient, while $A$ and $C$ are more stable in their efficiency. They remain efficient for any perturbation of the data values up to $7.14\%$.

In this example, however, the input data are fixed to be constantly one. Therefore, it is suitable to utilize the methods from Section~\ref{ssRobApprFixed}. Both the nonlinear model and its linear approximation yield the same rankings $1.2857$, $1$, $1.2857$, respectively. This says that the output data may simultaneously and independently vary within $\frac{1}{7}$ of their nominal values and $A,C$ remain efficient.
\end{example}

The above proposition also says that the ranking $r$ lies in the interval $[0,2]$. In contrast to the classical DEA ranking, the novel ranking is thus naturally normalized. In means that it can be used not only for comparing DMU's in one model, but for comparing DMU's from different, even unrelated models. Thus, $r$ gives as a universal ranking.

\begin{example}
Suppose that we have a ranking of banks like
$$
1.0062,\ 
0.986,\ 
1.0397,\ 
1.024,\ 
0.97263,\ 
1.0009,\ 
1.0438,\ 
0.96441,
$$
and suppose that we have a ranking of hospitals like
$$
1.21,\ 
0.65338,\ 
1.3254,\ 
0.6799,\ 
1.0382,\ 
0.60379,\ 
0.89957,\ 
1.2454.
$$
In the first case, all the banks have very similar ranking. Even though some are considered as efficient and some as inefficient, there is no substantial difference in their performance. On the other hand, performance of particular hospitals differs a lot. There are some considerably efficient and some highly inefficient. This example thus shows the universal feature of our approach to ranking. Notice that the classical DEA ranking cannot provide such an apparent conclusion.
\end{example}

\subsection{Order comparison}

\begin{proposition}\label{propCompApprox}
The DMU list sorted by \nref{maxRobRankAppr} is the same as the DMU list sorted by the classical DEA ranking.
\end{proposition}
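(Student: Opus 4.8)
The plan is to show that the robust score $\delta^*$ defined by \nref{maxRobRankAppr} is a \emph{fixed, strictly increasing} function of the classical DEA optimal value of \nref{lpDEA}; call the latter $\theta^*$. Once this functional identity is established --- and, crucially, once it is seen to be the \emph{same} function for every DMU, independently of the accompanying reference data $X,Y$ --- the proposition is immediate, since applying one strictly increasing transformation to each DMU's score cannot alter the sorted order.

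First I would eliminate $\delta$ from \nref{maxRobRankAppr}. For fixed feasible $(\tilde u,\tilde v)$ the two scalar constraints read $\delta\le y_0^T\tilde u-1$ and $\delta\le 1-x_0^T\tilde v$, so
\[
\delta^*=\max_{\tilde u,\tilde v\ge0,\ Y\tilde u\le X\tilde v}\ \min\{\,y_0^T\tilde u-1,\ 1-x_0^T\tilde v\,\}.
\]
The key structural observation is that the feasible cone $\{(\tilde u,\tilde v)\ge0:Y\tilde u\le X\tilde v\}$ is invariant under positive scaling $(\tilde u,\tilde v)\mapsto t(\tilde u,\tilde v)$. Along any ray the first term $t\,y_0^T\tilde u-1$ increases in $t$ while the second $1-t\,x_0^T\tilde v$ decreases, so the inner minimum over $t$ is maximized by balancing the two, i.e.\ at $t(y_0^T\tilde u+x_0^T\tilde v)=2$. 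Substituting this optimal $t$ collapses the max-min to
\[
\delta^*=\max_{\tilde u,\tilde v}\ \frac{y_0^T\tilde u-x_0^T\tilde v}{y_0^T\tilde u+x_0^T\tilde v},
\]
the maximum again taken over the same cone.

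Next I would note that dividing numerator and denominator by $x_0^T\tilde v$ turns the objective into $g(\rho)=\frac{\rho-1}{\rho+1}$ with $\rho=y_0^T\tilde u/x_0^T\tilde v$, and $g$ is strictly increasing. Hence maximizing the ratio objective is the same as maximizing $\rho$ over the cone; but by the same scaling homogeneity $\max\rho=\max\{y_0^T\tilde u: x_0^T\tilde v=1,\ Y\tilde u\le X\tilde v,\ \tilde u,\tilde v\ge0\}=\theta^*$, exactly the value of \nref{lpDEA} (this is just the ratio form of the CCR model). Therefore $\delta^*=g(\theta^*)=\frac{\theta^*-1}{\theta^*+1}$, equivalently $r=1+\delta^*=\frac{2\theta^*}{\theta^*+1}$, which is strictly increasing in $\theta^*\ge0$. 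Since this relation holds with the same $g$ for every DMU, the two sortings coincide.

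The step requiring the most care is the scaling reduction, where degenerate directions must be handled: if $x_0^T\tilde v=0$ is attainable with $y_0^T\tilde u>0$ then $\rho$ (and $\theta^*$) is unbounded and $\delta^*$ saturates at its extreme value, while if $y_0^T\tilde u=0$ on the whole cone the balancing crossover recedes to $t=\infty$ and one falls back on the trivial feasible point $\delta=-1$, $\tilde u=\tilde v=0$ of Proposition~\ref{propCompEffIneff}. I would also confirm that the relevant maxima are attained (or else pass to suprema), so that the boundary values $r=0,1,2$ line up with $\theta^*=0,1,\infty$ consistently with Proposition~\ref{propCompEffIneff}.
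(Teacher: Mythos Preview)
Your argument is correct and in fact lands on the very same closed-form relation the paper's proof implies, namely $\delta^*=\dfrac{\theta^*-1}{\theta^*+1}$ (equivalently $r=\dfrac{2\theta^*}{\theta^*+1}$), from which the order preservation is immediate. The route, however, is more elaborate than the paper's. The paper dispatches the proposition with a single change of variables: setting $\hat u:=\tilde u/(1-\delta)$, $\hat v:=\tilde v/(1-\delta)$ turns \nref{maxRobRankAppr} into
\[
\max\ \delta \st y_0^T\hat u\ge\frac{1+\delta}{1-\delta},\ x_0^T\hat v\le 1,\ Y\hat u-X\hat v\le0,\ \hat u,\hat v\ge0,
\]
which is exactly the classical CCR problem with the objective value reparametrized by the strictly increasing map $\delta\mapsto(1+\delta)/(1-\delta)$. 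Your approach instead first eliminates $\delta$ to get a max--min, then exploits the positive homogeneity of the feasible cone to balance the two branches and reduce to the ratio $y_0^T\tilde u/x_0^T\tilde v$, and finally identifies that ratio with the CCR value. What you gain is an explicit derivation of the formula $r=2\theta^*/(\theta^*+1)$ and a transparent treatment of the degenerate boundary cases ($\theta^*=0$ and $\theta^*=\infty$), which the paper leaves implicit; what the paper's substitution buys is brevity --- one line instead of a page --- and it sidesteps the need to argue about balancing $t$ and attainment of suprema along rays.
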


\begin{proof}
Substituting $\hat{u}:=\tilde{u}/(1-\delta)$ and $\hat{v}:=\tilde{v}/(1-\delta)$, the problem \nref{maxRobRankAppr} yields
\begin{align*}
\max\ \delta \st 
&y_0^T\hat{u}\geq \frac{1+\delta}{1-\delta},\ x_0^T\hat{v} \leq 1,\\ 
&Y\hat{u}-X\hat{v}\leq0,\ \hat{u},\hat{v}\geq0.
\end{align*}
The classical DEA ranking can be formulated as
\begin{align*}
\max\ \alpha \st y_0^Tu\geq\alpha,\ x_0^Tv \leq 1,\ Yu-Xv\leq0,\ u,v\geq0.
\end{align*}
Since the function $({1+\delta})/({1-\delta})$ is increasing on $\delta\in[-1,1)$, both rankings are sorted in the same way.
\end{proof}

This observation shows the favourable property that new ranking does not reverse the order between DMU's based on the classical approach. The ranking values itself are of course different in general.
 
The following observation shows the same property for the original robust ranking approach based on the generalized linear fractional program \nref{maxThmRobRank}. Therefore the linear programming approximation \nref{maxRobRankAppr} is tight and preserves the order of rankings produced by \nref{maxRobRankAppr}.

\begin{proposition}\label{propCompOrig}
The DMU list sorted by \nref{maxThmRobRank} is the same as the DMU list sorted by the classical DEA ranking.
\end{proposition}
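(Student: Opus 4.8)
The plan is to reduce the generalized linear fractional program \nref{maxThmRobRank} to a form directly comparable with the classical DEA ranking, mirroring the strategy used in the proof of Proposition~\ref{propCompApprox}. The classical ranking maximizes $\alpha$ subject to $y_0^Tu\geq\alpha$, $x_0^Tv\leq1$, $Yu-Xv\leq0$, $u,v\geq0$, so it suffices to show that the optimal $\delta^*$ in \nref{maxThmRobRank} is a strictly increasing function of this classical optimum $\alpha^*$. Since a strictly increasing transformation preserves the sorted order of any list of values, establishing such a monotone relationship between the two optima completes the proof.

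First I would clear the multiplicative factors by a suitable change of variables. In \nref{maxThmRobRank} the constraints are $(1-\delta)y_0^Tu\geq1$, $(1+\delta)x_0^Tv\leq1$, and $(1+\delta)Yu-(1-\delta)Xv\leq0$. Substituting $\hat{u}:=(1+\delta)u$ and $\hat{v}:=(1-\delta)v$ (valid for $\delta\in(-1,1)$ where both factors are positive), the homogeneous constraint becomes $Y\hat{u}-X\hat{v}\leq0$, the input constraint becomes $x_0^T\hat{v}\leq\frac{1-\delta}{1+\delta}$, and the output constraint becomes $y_0^T\hat{u}\geq\frac{1+\delta}{1-\delta}$. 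A further rescaling of both $\hat u$ and $\hat v$ by a common positive factor can normalize the input bound to $x_0^T\hat{v}\leq1$, at which point the problem reads $\max\ \delta$ subject to $y_0^T\hat{u}\geq\bigl(\frac{1+\delta}{1-\delta}\bigr)^2$, $x_0^T\hat{v}\leq1$, $Y\hat{u}-X\hat{v}\leq0$, $\hat{u},\hat{v}\geq0$. Because the feasible region for the weights is now exactly that of the classical problem, the largest achievable value of $y_0^T\hat u$ over this region is precisely $\alpha^*$, so the constraint forces $\bigl(\frac{1+\delta}{1-\delta}\bigr)^2\leq\alpha^*$, and $\delta^*$ is the value of $\delta$ that makes this an equality.

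The final step is to observe that the map $\delta\mapsto\bigl(\frac{1+\delta}{1-\delta}\bigr)^2$ is strictly increasing on $\delta\in[-1,1)$, hence invertible, so $\delta^*$ is a strictly increasing function of $\alpha^*$. Therefore sorting the DMU's by $\delta^*$ (equivalently by $r=1+\delta^*$) yields the same order as sorting by the classical ranking $\alpha^*$.

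The main obstacle I anticipate is handling the boundary and degenerate cases that the clean substitution glosses over: the change of variables $\hat u=(1+\delta)u$, $\hat v=(1-\delta)v$ requires $\delta\in(-1,1)$, and the common rescaling to normalize $x_0^T\hat v=1$ presupposes $x_0^T\hat v>0$ at optimum. I would need to argue that the optimum is never attained at $\delta=\pm1$ (the constraints degenerate there, as already noted for the LP model in Proposition~\ref{propCompEffIneff}) and to treat the case where the relevant inner products vanish, so that the monotone correspondence between $\delta^*$ and $\alpha^*$ holds throughout the admissible range rather than only generically.
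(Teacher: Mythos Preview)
Your argument is correct and essentially coincides with the paper's proof: both reduce \nref{maxThmRobRank} via a positive change of variables to the classical feasible region with the single modified constraint $y_0^T\hat{u}\geq\bigl(\tfrac{1+\delta}{1-\delta}\bigr)^2$, and then invoke monotonicity of $\delta\mapsto\bigl(\tfrac{1+\delta}{1-\delta}\bigr)^2$ on $[-1,1)$. The paper reaches this reformulation by first passing through \nref{maxRobRankApprNonlin} and then rescaling by $(1+\delta)^2$, whereas you substitute $\hat u=(1+\delta)u$, $\hat v=(1-\delta)v$ and rescale once more, but the resulting system and the key monotone function are identical; your added remarks on the boundary cases $\delta=\pm1$ are a refinement the paper leaves implicit.
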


\begin{proof}
Rewrite \nref{maxThmRobRank} to \nref{maxRobRankApprNonlin}, and then use substitution $\hat{u}:=\tilde{u}(1+\delta)^2$ and $\hat{v}:=\tilde{v}(1+\delta)^2$. We get
\begin{align*}
\max\ \delta \st 
&y_0^T\hat{u}\geq \frac{(1+\delta)^2}{(1-\delta)^2},\ x_0^T\hat{v} \leq 1,\\ 
&Y\hat{u}-X\hat{v}\leq0,\ \hat{u},\hat{v}\geq0.
\end{align*}
Again, the function $({1+\delta})^2({1-\delta})^{-2}$ is increasing on $\delta\in[-1,1)$, so both rankings are sorted in the same way.
\end{proof}

\begin{proposition}
Denote by $r$, $r^\text{\nref{maxRobRankAppr}}$ and $r^\text{\nref{maxThmRobRank}}$ the classical CCR and the ranking computed by \nref{maxRobRankAppr} and \nref{maxThmRobRank}, respectively. 
\begin{enumerate}[(1)]\addtolength{\itemsep}{-0.3\baselineskip}
\item
If \dmu is efficient, then $r\leq r^\text{\nref{maxRobRankAppr}}\leq r^\text{\nref{maxThmRobRank}}$.
\item
Let \dmu be inefficient. Then $r^\text{\nref{maxThmRobRank}}\leq r^\text{\nref{maxRobRankAppr}}$ and $r\leq r^\text{\nref{maxRobRankAppr}}$. Moreover, $r\leq r^\text{\nref{maxThmRobRank}}$ provided $r^\text{\nref{maxThmRobRank}}\geq3-2\sqrt{2}\approx 0.1716$.
\end{enumerate}
\end{proposition}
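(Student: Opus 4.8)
The plan is to collapse all three rankings to a single scalar and then verify the stated orderings by elementary algebra. Let $s$ be the optimal value of the linear program underlying the classical ranking, $s=\max\{y_0^Tu \mmid x_0^Tv\leq1,\ Yu-Xv\leq0,\ u,v\geq0\}$, so that \dmu is efficient exactly when $s\geq1$. The substitutions already carried out in the proofs of Propositions~\ref{propCompApprox} and~\ref{propCompOrig} show that at the optimum of \nref{maxRobRankAppr} one has $\frac{1+\delta}{1-\delta}=s$, and at the optimum of \nref{maxThmRobRank} one has $\frac{(1+\delta)^2}{(1-\delta)^2}=s$. Solving these for $\delta$ and accounting for the factor $2$ built into $\delta^*$ in \nref{maxThmRobRank}, I would record the closed forms
\begin{align*}
r^\text{\nref{maxRobRankAppr}}=\frac{2s}{s+1},\qquad
r^\text{\nref{maxThmRobRank}}=\frac{3\sqrt{s}-1}{\sqrt{s}+1}.
\end{align*}
Writing $t:=\sqrt{s}\geq0$ turns these into $r^\text{\nref{maxRobRankAppr}}=\frac{2t^2}{t^2+1}$ and $r^\text{\nref{maxThmRobRank}}=\frac{3t-1}{t+1}$, while the classical CCR score is $r=1$ when \dmu is efficient (the usual CCR normalization) and $r=s=t^2$ when \dmu is inefficient.

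For part~(1), assume \dmu efficient, so $t\geq1$ and $r=1$. The inequality $r\leq r^\text{\nref{maxRobRankAppr}}$ reads $1\leq\frac{2t^2}{t^2+1}$, which is immediate from $2t^2\geq t^2+1\Leftrightarrow t^2\geq1$. For the second inequality I would clear the positive denominators in $\frac{2t^2}{t^2+1}\leq\frac{3t-1}{t+1}$, reducing it to $2t^2(t+1)\leq(3t-1)(t^2+1)$, which simplifies to $0\leq t^3-3t^2+3t-1=(t-1)^3$; this holds precisely for $t\geq1$, with equality at $t=1$.

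For part~(2), assume \dmu inefficient, so $0\leq t<1$ and $r=t^2$. The very same cross-multiplication now yields $(t-1)^3\leq0$, i.e.\ $r^\text{\nref{maxThmRobRank}}\leq r^\text{\nref{maxRobRankAppr}}$; and $r\leq r^\text{\nref{maxRobRankAppr}}$ becomes $t^2(t^2+1)\leq2t^2$, i.e.\ $t^2\leq1$, which again holds. For the conditional inequality $r\leq r^\text{\nref{maxThmRobRank}}$ I would clear denominators in $t^2\leq\frac{3t-1}{t+1}$ to obtain $t^3+t^2-3t+1\leq0$ and factor it as $(t-1)(t^2+2t-1)\leq0$. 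Since $t-1<0$ on $[0,1)$, this is equivalent to $t^2+2t-1\geq0$, i.e.\ $t\geq\sqrt{2}-1$. As $r^\text{\nref{maxThmRobRank}}=\frac{3t-1}{t+1}$ is strictly increasing in $t$ and equals $3-2\sqrt{2}$ exactly at $t=\sqrt{2}-1$, the condition $t\geq\sqrt{2}-1$ is identical to the stated proviso $r^\text{\nref{maxThmRobRank}}\geq3-2\sqrt{2}$.

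The monotonicity and sign bookkeeping are routine; the genuinely delicate points are algebraic. The hard part will be spotting that the comparison between $r^\text{\nref{maxRobRankAppr}}$ and $r^\text{\nref{maxThmRobRank}}$ collapses to the perfect cube $(t-1)^3$, since this one identity delivers both the second inequality of part~(1) and the first of part~(2); and, for the conditional claim, factoring $t^3+t^2-3t+1=(t-1)(t^2+2t-1)$ and recognizing that its relevant root $t=\sqrt{2}-1$ matches the threshold $r^\text{\nref{maxThmRobRank}}=3-2\sqrt{2}$. I would also flag one conceptual subtlety: part~(1) relies on the normalization $r=1$ for efficient units, not on the raw value $s$; had one used $r=s$, the first inequality would fail for strictly super-efficient units ($s>1$). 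Finally, one must note that $3t-1$ can be negative (for $t<\tfrac13$), but since every cross-multiplication is by a strictly positive denominator the equivalences remain valid throughout.
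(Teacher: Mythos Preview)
Your argument is correct and proceeds along a genuinely different line from the paper's own proof. The paper never writes down the closed forms $r^\text{\nref{maxRobRankAppr}}=\frac{2s}{s+1}$ and $r^\text{\nref{maxThmRobRank}}=\frac{3\sqrt{s}-1}{\sqrt{s}+1}$; instead it argues each inequality separately by taking an optimal solution of one model, rescaling it by carefully chosen factors, and checking that the rescaled point is feasible in the other model with the same~$\delta$. For instance, to show $r^\text{\nref{maxRobRankAppr}}\leq r^\text{\nref{maxThmRobRank}}$ in the efficient case, the paper starts from an optimal $(\tilde u,\tilde v,\delta)$ for \nref{maxRobRankApprDouble}, sets $u:=\frac{\alpha}{1+\delta}\tilde u$, $v:=\frac{\alpha}{1-\delta}\tilde v$, picks $\alpha$ to force one constraint tight, and then reduces the remaining constraint to the scalar inequality $\frac{(1-\delta)^2(1+2\delta)}{(1+\delta)^2(1-2\delta)}\geq1$, verified for $\delta\geq0$; the other three comparisons follow the same template with different scalings.

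Your approach is shorter and more transparent: by pulling the closed forms out of the substitutions already used in Propositions~\ref{propCompApprox} and~\ref{propCompOrig}, every comparison becomes a one-variable polynomial inequality in $t=\sqrt{s}$, and the identity $(3t-1)(t^2+1)-2t^2(t+1)=(t-1)^3$ handles both directions of the $r^\text{\nref{maxRobRankAppr}}$ vs.\ $r^\text{\nref{maxThmRobRank}}$ comparison at once. The paper's constructive route has the minor advantage of not appealing to those earlier propositions and of making explicit which feasible points witness each inequality, but it obscures the single algebraic reason $(t-1)^3$ that your version exposes. Your remark that part~(1) hinges on the CCR normalization $r=1$ for efficient units (rather than the raw super-efficiency score~$s$) is exactly right and worth keeping.
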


\begin{proof}
Let \dmu be efficient. Then $r=1\leq r^\text{\nref{maxRobRankAppr}}$, so it remains to prove $r^\text{\nref{maxRobRankAppr}}\leq r^\text{\nref{maxThmRobRank}}$. Let $\tilde{u},\tilde{v},\delta$ be an optimal solution to \nref{maxRobRankApprDouble}. Define $u\coloneqq \frac{\alpha}{1+\delta} \tilde{u}$ and $v\coloneqq \frac{\alpha}{1-\delta} \tilde{v}$, where $\alpha>0$ is a parameter. Then $u,v,\delta$ satisfy \nref{maxThmRobRankB} and 
\begin{align*}
(1+\delta)y_0^Tu\geq\alpha(1+2\delta),\ (1-\delta)x_0^Tv \leq \alpha(1-2\delta).
\end{align*}
In order to fulfill $(1+\delta)x_0^Tv \leq 1$, we put $\alpha\coloneqq\frac{1-\delta}{(1+\delta)(1-2\delta)}$. The first of the above inequalities then reads
\begin{align*}
(1+\delta)y_0^Tu\geq \frac{(1-\delta)(1+2\delta)}{(1+\delta)(1-2\delta)}.
\end{align*}
In order that $(1-\delta)y_0^Tu\geq 1$, it must hold
\begin{align*}
\frac{(1-\delta)^2}{(1+\delta)^2}\,\frac{1+2\delta}{1-2\delta}\geq1.
\end{align*}
Simple manipulation shows that this is true whenever $\delta\geq0$, which is our case. Thus, $u,v,\delta$ is feasible for \nref{maxRobRankAppr}.

Let \dmu be inefficient. First we prove $r^\text{\nref{maxThmRobRank}}\leq r^\text{\nref{maxRobRankAppr}}$. Let ${u},{v},\delta$ be an optimal solution to \nref{maxThmRobRank}. Define $\tilde{u}\coloneqq \alpha(1+\delta) u$ and $\tilde{v}\coloneqq \alpha(1-\delta) {v}$, where $\alpha>0$ is a parameter. Then $\tilde{u},\tilde{v},\delta$ satisfy \nref{maxRobRankApprDoubleB} and
\begin{align*}
(1-\delta)y_0^T\tilde{u}\geq\alpha(1+\delta),\ 
(1+\delta)x_0^T\tilde{v}\leq\alpha(1-\delta).
\end{align*}
In order to fulfill $x_0^T\tilde{v} \leq 1-2\delta$, we put $\alpha\coloneqq\frac{(1-2\delta)(1+\delta)}{1-\delta}$. The first of the above inequalities then reads
\begin{align*}
(1-\delta)y_0^T\tilde{u}\geq \frac{(1-2\delta)(1+\delta)^2}{1-\delta}.
\end{align*}
In order that $y_0^T\tilde{u}\geq 1+2\delta$, it must hold
\begin{align*}
\frac{(1-2\delta)(1+\delta)^2}{(1-\delta)^2}\geq 1+2\delta.
\end{align*}
Manipulations as above show that this is true as long as $\delta\leq0$, which is our case. Thus, $\tilde{u},\tilde{v},\delta$ is feasible for \nref{maxRobRankApprDouble}.

Now we prove $r\leq r^\text{\nref{maxRobRankAppr}}$. Let ${u},{v}$ be an optimal solution to the CCR model \nref{lpDEA}, and define substitution $\delta\coloneqq (y_0^T{u}-1)(y_0^T{u}+1)^{-1}<0$, $\tilde{u}\coloneqq (1-\delta){u}$, $\tilde{v}\coloneqq (1-\delta){v}$. Then $\delta,\tilde{u},\tilde{v}$ satisfies \nref{maxRobRankApprB} and the second inequality in \nref{maxRobRankApprA}. By definition of $\delta$, we have $y_0^T{u}=(1+\delta)(1-\delta)^{-1}$, from which it follows that the first inequality in \nref{maxRobRankApprA} is satisfied and $y_0^T\tilde{u}= 1+\delta\leq r^\text{\nref{maxRobRankAppr}}$.

Eventually, we prove $r\leq r^\text{\nref{maxThmRobRank}}$.
Let $\tilde{u},\tilde{v}$ be an optimal solution to the CCR model \nref{lpDEA}, and define substitution ${u}\coloneqq \alpha\tilde{u}$, ${v}\coloneqq \beta\tilde{v}$, where $\alpha,\beta>0$ are parameters to be specified. In order that $v$ satisfies the second inequality in \nref{maxThmRobRankA}, we put $\beta\coloneqq(1+\delta)^{-1}$. In order that the first inequality in \nref{maxThmRobRankB} is a multiple of the inequality $Y\tilde{u}-X\tilde{v}\leq0$, we consider the equation $(1+\delta)\alpha=(1-\delta)\beta$, from which $\alpha=(1-\delta)(1+\delta)^{-2}$. In order that the first inequality in \nref{maxThmRobRankA} holds as equation, we have the constraint
$$
(1-\delta) y_0^T \alpha\tilde{u}=1,
$$
or
$$
(1-\delta)^2(1+\delta)^{-2} y_0^T \tilde{u}=1,
$$
giving raise to the value of $\delta<1$.
Now, $r\leq r^\text{\nref{maxThmRobRank}}$ holds true when $r=y_0^T \tilde{u}\leq 1+2\delta$ (herein, 2$\delta$ is the objective value of \nref{maxThmRobRank}), which equivalently reads
$$
(1-\delta)^{-2}(1+\delta)^{2} \leq 1+2\delta,
$$
from which $\delta\in[1-\sqrt{2},0]$. Therefore the condition holds true provided $\delta\geq1-\sqrt{2}$, or one of $r,r^\text{\nref{maxThmRobRank}}$ is at least $3-2\sqrt{2}$.
\end{proof}

The proposed robust approach ranking is suitable for diverse kinds of generalizations of the standard DEA model.

\subsection{Interval data}

Our approach is suitable to deal with interval data, too. Suppose we are given interval data $[\unum{x}_0,\onum{x}_0]$, $[\unum{y}_0,\onum{y}_0]$, $[\unum{X},\onum{X}]$ and $[\unum{Y},\onum{Y}]$, covering the not exactly known data $x_0$, $y_0$, $X$ and $Y$, respectively.
The traditional approach to deal with interval data is to determine the best case and worst case rankings \cite{DesSmi2002,EmrRos2012,EntMae2002,JabFia2004,KhaTav2015,MosSal2010}; the same idea is used in other interval-valued linear programming models \cite{Fie2006,Hla2012a,HlaCer2016a}. The best case happens in the setting $x_0:=\unum{x}_0$, $y_0:=\onum{y}_0$, $X:=\onum{X}$ and $Y:=\unum{Y}$, and the worst case happens for $x_0:=\onum{x}_0$, $y_0:=\unum{y}_0$, $X:=\unum{X}$ and $Y:=\onum{Y}$. Thus, it is sufficient to solve \nref{maxRobRankAppr} twice for both settings, and we get the efficiency range $[\unum{r},\onum{r}]$ of all possible rankings. The value of $\unum{r}\geq1$ means that \dmu is always efficient, whereas $\onum{r}<1$ means that \dmu is efficient for no possible realization of interval data.

\subsection{Additional DMU}

Let us now consider the situation when there is an additional DMU with input data vector $x_a$ and output data vector $y_a$, and the question now states: How the current ranking will change?
Obviously, the additional DMU means a novel constraint in the optimization formulation, so the ranking cannot increase; it can only remain the same or decrease.
For both the classical and novel rankings, the set of the additional data that are unfavourable in the sense that \dmu leaves off efficiency, has a nice geometrical form.

\begin{proposition}
Suppose \dmu is efficient. For both models \nref{lpDEA} and \nref{maxRobRankAppr}, the set of all values $x_a\in\R^{n_1}$ and $y_a\in\R^{n_1}$ causing \dmu to be inefficient forms a convex set the closure of which is a convex polyhedron.
\end{proposition}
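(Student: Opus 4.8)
The plan is to reduce the claim, for both models simultaneously, to an infeasibility statement about a single fixed polyhedron, and then to read off convexity and polyhedrality of the closure from the Minkowski--Weyl decomposition of that polyhedron. The key preliminary observation is that the unfavourable set is the \emph{same} for \nref{lpDEA} and \nref{maxRobRankAppr}. Indeed, by Proposition~\ref{propCompEffIneff}, under \nref{maxRobRankAppr} \dmu is efficient iff $\delta^*\geq0$, i.e.\ iff $\delta=0$ is feasible; since increasing $\delta$ only tightens the first two constraints of \nref{maxRobRankAppr}, $\delta=0$ is feasible exactly when
\begin{align*}
y_0^Tu\geq1,\ x_0^Tv\leq1,\ Yu-Xv\leq0,\ u,v\geq0
\end{align*}
is solvable, which is also the efficiency criterion for \nref{lpDEA}. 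Introducing the extra unit $(x_a,y_a)$ appends the single row $y_a^Tu-x_a^Tv\leq0$ to $Yu-Xv\leq0$. Hence, writing $P$ for the feasible set of the displayed system (nonempty because \dmu is efficient), \dmu turns inefficient, for both models, exactly when $P\cap\{(u,v)\mmid y_a^Tu-x_a^Tv\leq0\}=\emptyset$, so the unfavourable set $\mna{U}$ is one and the same.

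Next I would prove convexity directly. The emptiness above says $y_a^Tu-x_a^Tv>0$ for every $(u,v)\in P$, so
\begin{align*}
\mna{U}=\bigcap_{(u,v)\in P}\{(x_a,y_a)\mmid y_a^Tu-x_a^Tv>0\}.
\end{align*}
For each fixed $(u,v)\in P$ this is an open half-space in $(x_a,y_a)$ (a genuine one, since $y_0^Tu\geq1$ forces $u\neq0$), and an intersection of half-spaces is convex; hence $\mna{U}$ is convex.

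Finally I would extract a finite description. By Minkowski--Weyl, $P=\conv\{p_1,\dots,p_k\}+\operatorname{cone}\{r_1,\dots,r_l\}$ with extreme points $p_i=(u_i,v_i)$ and extreme rays $r_j=(s_j,t_j)$. Minimizing $y_a^Tu-x_a^Tv$ over $P$, the augmented system is infeasible iff this linear form is bounded below on $P$ and its minimum is strictly positive, i.e.\ iff $y_a^Ts_j-x_a^Tt_j\geq0$ for all $j$ and $y_a^Tu_i-x_a^Tv_i>0$ for all $i$. Thus $\mna{U}$ is a finite intersection of closed half-spaces (from the rays) and open half-spaces (from the extreme points). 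Relaxing the strict inequalities to non-strict ones gives a convex polyhedron $\overline{Q}$; and since $\mna{U}\neq\emptyset$ (for instance $x_a=0$ together with a strictly positive $y_a$ lies in $\mna{U}$, as then every $y_a^Tu_i>0$ and every $y_a^Ts_j\geq0$), joining any point of $\overline{Q}$ to a point of $\mna{U}$ by a segment and letting the parameter tend to the $\overline{Q}$ endpoint shows $\overline{\mna{U}}=\overline{Q}$, the desired polyhedron.

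The step I expect to be most delicate is the strict-versus-non-strict bookkeeping in this finite description: the inequalities coming from extreme points must be strict, because if the minimum of $y_a^Tu-x_a^Tv$ over $P$ is attained with value $0$ then the augmented system is feasible and $(x_a,y_a)\notin\mna{U}$, whereas the ray inequalities stay non-strict. This asymmetry is precisely why $\mna{U}$ itself need not be closed, while its closure is nonetheless exactly the polyhedron $\overline{Q}$.
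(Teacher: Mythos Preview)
Your proof is correct and follows essentially the same strategy as the paper's: reduce to model \nref{lpDEA} via Proposition~\ref{propCompEffIneff}, then use the extremal structure of the feasible region to obtain a finite intersection of half-spaces, some strict and some non-strict. The only packaging difference is that the paper passes to the conic hull $\mna{K}$ of the feasible set and speaks of its extremal directions (distinguishing those in $\mna{K}$ from those only in its closure), whereas you apply Minkowski--Weyl directly to the polyhedron $P$ (extreme points yielding strict inequalities, recession rays yielding non-strict ones); your version is in fact a bit more explicit, since you verify nonemptiness of $\mna{U}$ and spell out the segment argument showing that relaxing the strict inequalities gives precisely $\overline{\mna{U}}$.
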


\begin{proof}
On account of Proposition~\ref{propCompEffIneff}, it is sufficient prove it for the model \nref{lpDEA} only. \dmu is inefficient iff the linear system of inequalities
\begin{align}\label{sysPfPropAddDMU}
y_0^Tu\geq1 \ x_0^Tv \leq 1,\ Yu-Xv\leq0,\ u,v\geq0,\ 
\end{align}
is infeasible. Let $\mna{K}$ be the convex cone generated by the constraints 
$$
y_0^Tu\geq1 \ x_0^Tv \leq 1,\ Yu-Xv\leq0,\ u,v\geq0,
$$
that is, the smallest convex cone containing the points described by these constraints. Its closure is a convex polyhedral cone, but this cone itself needn't be closed since \nref{sysPfPropAddDMU} needn't be bounded. Then \dmu in the model with the additional DMU is inefficient iff the linear program
\begin{align*}
\min\ y_a^Tu-x_a^Tv \st (u^T,v^T)\in\mna{K}
\end{align*}
has the unique minimizer at the origin. Denoting $h_1,\dots,h_p$ the extremal directions of $\mna{K}$, we can reformulate it as 
\begin{align*}
(u^T,v^T)h_i<0\ \mbox{ or }\ (u^T,v^T)h_i\leq0, \quad i=1,\dots,p,
\end{align*}
where the strict inequality is when $h_i\in\mna{K}$ and the non-strict inequality is when $h_i$ lies in the closure of $\mna{K}$ only. This characterizes the convex set the closure of which is a convex polyhedral cone.
\end{proof}

Notice that linear programming problems depending on some parameters are thoroughly investigated in \cite{Gal1979,NozGud1974}.

\subsection{Additional inputs or outputs}

How will the ranking be affected by the situation when the decision maker gives an additional input or output \cite{Tak2000}?
Apparently, the efficiencies cannot worsen, so they remain the same or increase.
The set of the additional data that are favourable in the sense that an inefficient \dmu becomes efficient, has again a nice geometrical form.
Consider the situation one output vector $(y_b,y_c)$ is added.

\begin{proposition}
Suppose \dmu is inefficient. For both models \nref{lpDEA} and \nref{maxRobRankAppr}, the set of all values $(y_b,y_c)\in\R^{1+m}$ causing \dmu to be efficient forms a convex set the closure of which is a convex polyhedron.
\end{proposition}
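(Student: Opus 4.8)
The plan is to reduce the statement to the classical model and then deal with the added output by a linearizing substitution. By Proposition~\ref{propCompEffIneff} applied to the augmented instance (one extra output), the efficiency/inefficiency classification produced by \nref{maxRobRankAppr} coincides with that of \nref{lpDEA}; hence the set of $(y_b,y_c)$ making \dmu efficient is identical for both models, and it suffices to argue for \nref{lpDEA}. Adding the output amounts to extending $y_0$ by the scalar $y_b$ and $Y$ by the column $y_c\in\R^{m}$, together with a fresh nonnegative weight $w$ for the new output. Thus \dmu is efficient exactly when the system
\begin{align*}
y_0^Tu + y_b w\geq1,\ x_0^Tv \leq 1,\ Yu + y_c w - Xv\leq0,\ u,v\geq0,\ w\geq0
\end{align*}
is feasible, and I let $S$ denote the set of $(y_b,y_c)$ for which this holds.

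First I would record the key reduction: since \dmu is inefficient, the original system (the case $w=0$) is infeasible, so every feasible point of the augmented system has $w>0$. This permits division by $w$, but more conveniently I substitute $p:=y_bw$ and $q:=y_cw$, which turns the two bilinear constraints into the linear ones
\begin{align*}
y_0^Tu + p\geq1,\ x_0^Tv \leq 1,\ Yu + q - Xv\leq0,\ u,v\geq0.
\end{align*}
Let $\Pi\subseteq\R^{1+m}$ be the projection onto the $(p,q)$-coordinates of the polyhedron described by these inequalities; as a projection of a polyhedron (Fourier--Motzkin), $\Pi$ is itself a convex polyhedron. Reading off the definitions, $(y_b,y_c)\in S$ iff there is $w>0$ with $(y_bw,y_cw)\in\Pi$, equivalently
\begin{align*}
S=\bigcup_{\lambda>0}\lambda\,\Pi.
\end{align*}

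It then remains to extract convexity and polyhedrality of the closure from this description. Convexity follows because the positive hull of a convex set is convex: a convex combination $\theta\lambda_1a_1+(1-\theta)\lambda_2a_2$ with $a_1,a_2\in\Pi$ rewrites as a positive multiple of a genuine convex combination of $a_1$ and $a_2$, which lies in $\Pi$. For the closure I would invoke the Minkowski--Weyl decomposition $\Pi=\conv V+\mathrm{cone}(R)$ with finite sets $V,R$; then $\bigcup_{\lambda>0}\lambda\Pi$ has closure equal to the finitely generated cone $\mathrm{cone}(V\cup R)$, which is a convex polyhedron, matching the statement.

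The main obstacle is the bilinear coupling $y_bw$ and $y_cw$, which makes the feasibility region in the joint variables $(y_b,y_c,u,v,w)$ non-convex, so that a direct projection onto $(y_b,y_c)$ does not obviously yield a convex set. The substitution $p=y_bw$, $q=y_cw$ is precisely what removes this obstruction: it linearizes the constraints, and the remaining freedom in $w>0$ collapses to a uniform rescaling of $(p,q)$, exhibiting $S$ as the positive hull of a polyhedron. A secondary point to treat with care is that the recession directions of $\Pi$ enter only as limits (the $w\to\infty$, equivalently $\lambda\to0^+$, regime), so that $S$ itself may fail to be closed; this is exactly why polyhedrality is asserted for $\overline{S}$ rather than for $S$.
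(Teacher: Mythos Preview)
Your argument is correct and is genuinely different from the paper's. The paper dualizes the augmented feasibility system via Farkas' lemma, collects the dual constraints not involving $(y_b,y_c)$ into a cone $\mna{K}$, and then characterizes the efficient region by the sign of the linear functional $z\mapsto -y_bz_1+y_c^Tz_3$ on the extremal rays of $\mna{K}$. Your route stays entirely in the primal: the substitution $(p,q)=(y_bw,y_cw)$ kills the bilinearity, the projection onto $(p,q)$ produces a polyhedron $\Pi$ by Fourier--Motzkin, and the remaining freedom in $w>0$ exhibits $S$ as the positive hull $\bigcup_{\lambda>0}\lambda\Pi$, whose closure is the finitely generated cone $\mathrm{cone}(V\cup R)$ via Minkowski--Weyl. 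Both approaches land on a polyhedral cone description; yours is arguably more elementary (no Farkas step, and the role of the non-closedness is made explicit through the $\lambda\to 0^+$ limit), while the paper's dual picture gives a direct inequality description in terms of the extremal directions $h_i$. One small point you leave implicit but which is easy to supply: $\Pi\neq\emptyset$ (take $u=0$, $v=0$, $p=1$, $q=0$), so the Minkowski--Weyl decomposition is non-vacuous.
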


\begin{proof}
\dmu is efficient in the extended model iff the linear system of inequalities
\begin{align*}
y_bu_0 + y_0^Tu\geq1, \ x_0^Tv \leq 1,\ y_cu_0+Yu-Xv\leq0,\ u,v\geq0
\end{align*}
is feasible. By the Farkas lemma, the dual system
\begin{align*}
-y_bz_1+y_c^Tz_3\geq0,\ 
-y_0z_1+Y^Tz_3\geq0,\ 
 x_0z_2-X^Tz_3\geq0,\ 
-z_1+z_2\leq-1,\ 
z_1,z_2,z_3\geq0
\end{align*}
is equivalently infeasible. Let $\mna{K}$ be the convex cone generated by the constraints 
\begin{align*}
-y_0z_1+Y^Tz_3\geq0,\ 
 x_0z_2-X^Tz_3\geq0,\ 
-z_1+z_2\leq-1,\ 
z_1,z_2,z_3\geq0.
\end{align*}
Then \dmu is efficient iff the linear program
\begin{align*}
\max\ -y_bz_1+y_c^Tz_3 \st (z_1,z_2,z_3^T)\in\mna{K}
\end{align*}
has the unique maximizer at the origin. Denoting $h_1,\dots,h_p$ the extremal directions of $\mna{K}$, we can reformulate it as 
\begin{align*}
(u^T,v^T)h_i<0\ \mbox{ or }\ (u^T,v^T)h_i\leq0, \quad i=1,\dots,p,
\end{align*}
where the strict inequality is when $h_i\in\mna{K}$ and the non-strict inequality is when $h_i$ lies in the closure of $\mna{K}$ only.
\end{proof}

\subsection{BCC model}\label{ssBCC}

Our robust approach to DMU ranking is suitable for employing various alternative models to CCR, and the evaluation technique is easily adapted. For illustration, consider the BCC model originally proposed by Banker et al.\ \cite{BanChar1984}; see also \cite{CooSei2007,Zhu2016}. The ranking of the test unit is computed by the linear program
\begin{align}\label{lpBCC}
\max\ y_0^Tu-v_0 \st x_0^Tv \leq 1,\ Yu-Xv-1v_0\leq0,\ u,v\geq0,
\end{align}
where $v_0$ is an additional free variable and $1$ is a vector of ones with convenient dimension.

Proceeding similarly as for the CCR model, we arrive at the following robust BCC model. The ranking of \dmu is $r=1+\delta^*$, where
\begin{subequations}\label{maxRobBcc}
\begin{align}
\delta^*=2\max\ \delta \st 
&(1-\delta)y_0^Tu-v_0\geq1,\ (1+\delta)x_0^Tv\leq 1,\\ 
&(1+\delta)Yu-(1-\delta)Xv-1v_0\leq0,\ u,v\geq0.
\end{align}
\end{subequations}
Again, this model belongs to the family of generalized linear fractional linear programming problems, and thus can be solved in polynomial time. Nevertheless, finding a linear programming approximation is still a useful issue.

Substitute $\tilde{u}:=u/(1-\delta)$, $\tilde{v}:=v/(1+\delta)$ and $\tilde{v}_0:=v_0/(1-\delta^2)$. We get
\begin{subequations}\label{maxRobBccTemp}
\begin{align}
2\max\ \delta \st 
&y_0^T\tilde{u}-\frac{1+\delta}{1-\delta}\tilde{v}_0\geq(1-\delta)^{-2},\ 
x_0^T\tilde{v} \leq (1+\delta)^{-2},\\ 
&Y\tilde{u}-X\tilde{v}-1\tilde{v}_0\leq0,\ \tilde{u},\tilde{v}\geq0.
\end{align}
\end{subequations}
Linearizing the nonlinear terms as follows
\begin{align*}
(1-\delta)^{-2}\approx 1+2\delta,\quad
(1+\delta)^{-2}\approx 1-2\delta,\quad
\frac{1+\delta}{1-\delta}\tilde{v}_0\approx \tilde{v}_0,
\end{align*}
and rescaling $\delta$, we arrive at a linear programming model
\begin{subequations}\label{maxRobBccLp}
\begin{align}
\delta^*=\max\ \delta \st 
&y_0^T\tilde{u}-\tilde{v}_0\geq 1+\delta,\ 
x_0^T\tilde{v} \leq 1-\delta,\\ 
&Y\tilde{u}-X\tilde{v}-1\tilde{v}_0\leq0,\ \tilde{u},\tilde{v}\geq0.
\end{align}
\end{subequations}

Now, we state some basic properties. Similarly as in Section~\ref{ssRobBasic}, we derive the following.

\begin{proposition}
For both the nonlinear model \nref{maxRobBcc} and linear model \nref{maxRobBccLp} we have 
\begin{enumerate}[(1)]\addtolength{\itemsep}{-0.3\baselineskip}
\item
$r\geq1$ if and only if \dmu is efficient,
\item
$r<1$ if and only if \dmu is inefficient.
\end{enumerate}
\end{proposition}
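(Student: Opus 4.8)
The plan is to mirror the structure of the proof of Proposition~\ref{propCompEffIneff} and adapt it to the BCC setting, handling the two models \nref{maxRobBcc} and \nref{maxRobBccLp} in parallel. The key observation is that both claims reduce to understanding when \dmu is efficient in the classical BCC sense \nref{lpBCC}, which happens exactly when the optimal value of \nref{lpBCC} satisfies $y_0^Tu-v_0\geq1$, i.e.\ when the system
\begin{align}\label{sysBCCeff}
y_0^Tu-v_0\geq1,\ x_0^Tv\leq1,\ Yu-Xv-1v_0\leq0,\ u,v\geq0
\end{align}
is feasible (with $v_0$ free). I would first establish this characterization of BCC-efficiency, and then show that each of the two robust models attains $r\geq1$ precisely when \nref{sysBCCeff} is feasible.

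First I would treat the linear model \nref{maxRobBccLp}. The forward direction is immediate: if \dmu is efficient, then \nref{sysBCCeff} is feasible, and setting $\delta=0$ together with this feasible point shows $\delta=0$ is feasible for \nref{maxRobBccLp}, whence $r=1+\delta^*\geq1$. For the converse I would argue the contrapositive exactly as in the proof of Proposition~\ref{propCompEffIneff}(2): if \dmu is inefficient, then \nref{sysBCCeff} is infeasible, and I claim the perturbed system with $y_0^Tu-v_0\geq1+\eps$ and $x_0^Tv\leq1-\eps$ stays infeasible for some small $\eps>0$. The argument is the same accumulation-point/compactness argument: were it feasible for every $\eps>0$, the family of solution polyhedra would be inclusion-monotone in $\eps$, and a convergent subsequence of solutions as $\eps\to0$ would produce a solution of \nref{sysBCCeff}, a contradiction. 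This yields $\delta^*<0$, hence $r<1$. The only point requiring care here is that $v_0$ is a free variable, so one must confine attention to a bounded region to extract the accumulation point; I would note that one may normalize or bound $v_0$ using the constraint $Yu-Xv-1v_0\leq0$ together with $u,v\geq0$ so that the compactness step goes through.

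For the nonlinear model \nref{maxRobBcc}, I would again show $\delta=0$ is feasible iff \nref{sysBCCeff} is feasible. At $\delta=0$ the constraints of \nref{maxRobBcc} collapse exactly to \nref{sysBCCeff} (the factors $(1\pm\delta)$ all become $1$), so efficiency gives $\delta^*\geq0$ and $r\geq1$. The converse direction uses the same perturbation-stability argument as above, now noting that for small $\delta$ the system \nref{maxRobBcc} is a continuous deformation of \nref{sysBCCeff}; if \nref{sysBCCeff} is infeasible then by the same compactness reasoning the deformed system is infeasible for all sufficiently small $\delta>0$, giving $\delta^*<0$. Item (2), $r<1$ iff inefficient, then follows formally as the logical negation of item (1), just as in Proposition~\ref{propCompEffIneff}(3).

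The main obstacle I anticipate is the compactness/accumulation-point step in the presence of the free variable $v_0$ and the unbounded weight cone, since unlike the CCR case there is no immediate normalization bounding all variables simultaneously. I would handle this by restricting to the recession-free part of the feasible set: any recession direction $(\Delta u,\Delta v,\Delta v_0)$ of \nref{sysBCCeff} satisfies $y_0^T\Delta u-\Delta v_0\geq0$, $x_0^T\Delta v\leq0$, $Y\Delta u-X\Delta v-1\Delta v_0\leq0$, $\Delta u,\Delta v\geq0$, and one checks such a direction cannot help satisfy the strict surplus $+\eps$; hence solutions can be taken in a bounded set and the limit argument applies verbatim. Once this technical point is settled, the remainder is a routine transcription of the CCR arguments.
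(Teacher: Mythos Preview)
Your approach is exactly what the paper intends: it states only ``Similarly as in Section~\ref{ssRobBasic}'' and gives no separate proof, so mirroring the argument of Proposition~\ref{propCompEffIneff} with the BCC adaptation is precisely the paper's route. Your identification of the free variable $v_0$ as the one new technical wrinkle, and the recession-direction fix to recover compactness, is the right refinement; just be careful that to conclude $\delta^*<0$ strictly you must show the \emph{relaxed} system (with $1-\eps$ and $1+\eps$, i.e.\ $\delta=-\eps$) stays infeasible for some small $\eps>0$, not the tightened one you wrote, which is trivially infeasible.
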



\begin{proposition}
For the nonlinear model \nref{maxRobBcc} we have $r\in[-1,3]$, and for the linear model \nref{maxRobBccLp} we have $r\in[0,2]$.
\end{proposition}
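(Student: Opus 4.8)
The plan is to follow the template of the proof of Proposition~\ref{propCompEffIneff}(1): translate each claimed bound on $r=1+\delta^*$ into a bound on the optimal $\delta$ of the respective program, establish the upper bound by showing that any $\delta>1$ makes the constraints inconsistent, and establish the lower bound by exhibiting an explicit feasible point at $\delta=-1$. For the linear model \nref{maxRobBccLp} we have $\delta^*=\max\delta$, so $r\in[0,2]$ is equivalent to $\delta^*\in[-1,1]$; for the nonlinear model \nref{maxRobBcc} we have $\delta^*=2\max\delta$, so $r\in[-1,3]$ is equivalent to $\max\delta\in[-1,1]$.

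For the linear model \nref{maxRobBccLp} the argument is essentially the CCR one. For the lower bound I would verify that $\delta=-1$, $\tilde u=0$, $\tilde v=0$, $\tilde v_0=0$ is feasible: the first constraint reads $0\geq1+\delta=0$, the second $0\leq1-\delta=2$, and the third $0\leq0$; hence $\delta^*\geq-1$ and $r\geq0$. For the upper bound I would note that $\delta>1$ makes the right-hand side of $x_0^T\tilde v\leq1-\delta$ negative, contradicting $x_0^T\tilde v\geq0$ (valid since $x_0\geq0$, $\tilde v\geq0$); hence $\delta^*\leq1$ and $r\leq2$.

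For the nonlinear model \nref{maxRobBcc} the upper bound is where the free variable $v_0$ must be exploited. Assuming $\delta>1$, so that $1+\delta>0$ and $-(1-\delta)=\delta-1>0$, I would read the third constraint row by row as $v_0\geq(1+\delta)(Yu)_i+(\delta-1)(Xv)_i$; since $Y,X,u,v\geq0$, each right-hand side is nonnegative, so $v_0\geq0$ (there being at least one competing unit, $m\geq1$). The first constraint, on the other hand, gives $v_0\leq(1-\delta)y_0^Tu-1\leq-1$, because $(1-\delta)y_0^Tu\leq0$. These two bounds on $v_0$ are incompatible, so $\delta>1$ is infeasible and $\max\delta\leq1$, i.e.\ $r\leq3$.

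The main obstacle is the nonlinear lower bound, since the trivial point $u=v=0$ that worked in the linear case fails here: at $\delta=-1$ the first constraint would demand $v_0\leq-1$ while the third (with $v=0$) demands $v_0\geq0$. Instead I would use the degeneracy at $\delta=-1$, where $1+\delta=0$ annihilates the $Yu$ term in the third constraint and the $x_0^Tv$ term in the second. Taking $v=0$, $v_0=0$, and any $u\geq0$ with $y_0^Tu\geq\tfrac12$ (which exists as soon as \dmu has a positive output, $y_0\neq0$), the second constraint becomes $0\leq1$, the third $0\leq0$, and the first $2\,y_0^Tu\geq1$, all satisfied. Hence $\delta=-1$ is feasible, $\max\delta\geq-1$, and $r\geq-1$. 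Thus the delicate point is that one must combine the collapse of the coefficients at $\delta=-1$ with the existence of a strictly positive output component to build a feasible solution, rather than relying on the zero vector.
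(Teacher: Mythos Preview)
Your proposal is correct and follows the approach the paper itself indicates (the paper gives no explicit proof here, only pointing back to the CCR analogue in Section~\ref{ssRobBasic}). In fact you go further than the paper does: Section~\ref{ssRobBasic} only proves the linear CCR bounds explicitly, so the nonlinear BCC argument---especially the lower bound, where the zero solution fails and you correctly exploit the vanishing of the $(1+\delta)$ coefficients at $\delta=-1$ together with $y_0\neq0$ to build a feasible point---is a genuine detail you have supplied that the paper leaves implicit. The mild side assumptions you flag ($m\geq1$ for the nonlinear upper bound, $y_0\neq0$ for the nonlinear lower bound) are the natural non-degeneracy conditions in a DEA setting.
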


\begin{proposition}
The order of DMU's sorted by \nref{maxRobBcc} and \nref{maxRobBccLp} is the same as the DMU list sorted by the classical BCC model.
\end{proposition}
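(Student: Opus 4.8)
The plan is to mimic Propositions~\ref{propCompApprox} and~\ref{propCompOrig}: apply a $\delta$-dependent change of variables that clears $\delta$ out of the feasibility region, so that $\delta$ survives only in a single scalar threshold, and then compare how that threshold depends on $\delta$ with the classical BCC optimum. Throughout I write the classical model \nref{lpBCC} in epigraph form as $\alpha^*=\max\{\alpha: y_0^Tu-v_0\geq\alpha,\ x_0^Tv\leq1,\ Yu-Xv-1v_0\leq0,\ u,v\geq0\}$, so that $\alpha^*$ is the quantity governing the classical order.

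For the linear model \nref{maxRobBccLp} the plan succeeds cleanly. Substituting $\hat u:=\tilde u/(1-\delta)$, $\hat v:=\tilde v/(1-\delta)$, $\hat v_0:=\tilde v_0/(1-\delta)$ (valid for $\delta<1$) turns the constraints into
\begin{align*}
y_0^T\hat u-\hat v_0\geq\tfrac{1+\delta}{1-\delta},\quad x_0^T\hat v\leq1,\quad Y\hat u-X\hat v-1\hat v_0\leq0,\quad \hat u,\hat v\geq0 .
\end{align*}
The last three are exactly the classical BCC constraints and no longer involve $\delta$; hence $\delta$ is feasible iff $\tfrac{1+\delta}{1-\delta}\leq\alpha^*$, giving $\delta^*=\tfrac{\alpha^*-1}{\alpha^*+1}$. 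This is a \emph{DMU-independent} strictly increasing function of $\alpha^*$, so the list sorted by \nref{maxRobBccLp} coincides with the classical one.

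For the nonlinear model \nref{maxRobBcc} I would rewrite it as \nref{maxRobBccTemp}, scale by $(1+\delta)^2$ and set $\hat v_0:=(1+\delta)^2\tilde v_0$; this again recovers the classical BCC feasible region, but now the first constraint reads $y_0^T\hat u-g\,\hat v_0\geq g^2$ with $g:=\tfrac{1+\delta}{1-\delta}$. Eliminating the free intercept $\hat v_0$ (its optimal value is $\max_i(Y_i\hat u-X_i\hat v)$, and $g>0$), feasibility of $\delta$ becomes $\Psi(g)\geq g^2$, where $\Psi(g):=\max\{y_0^T\hat u-g\max_i(Y_i\hat u-X_i\hat v): x_0^T\hat v\leq1,\ \hat u,\hat v\geq0\}$ is the classical BCC optimum with the intercept weighted by $g$. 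The function $\Psi$ is convex and piecewise linear with $\Psi(1)=\alpha^*$, and the robust radius $\delta^*$ corresponds to the largest $g$ for which $\Psi(g)\geq g^2$.

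The hard part -- and exactly where the analogy with the CCR case breaks down -- is this last step. In the CCR setting there is no intercept, so the analogous $\Psi$ does not depend on $g$ at all (it equals $\alpha^*_{\mathrm{CCR}}$), the threshold reduces to $\alpha^*\geq g^2$, and $\delta^*$ is a fixed increasing function of $\alpha^*$; the free returns-to-scale variable $v_0$ is what ruins this in BCC. Indeed, by the envelope theorem the local slope of $\Psi$ at $g=1$ equals $-v_0^*$, the classical optimal intercept, so $\delta^*$ depends on the whole profile $g\mapsto\Psi(g)$ and not merely on $\Psi(1)=\alpha^*$: two units with equal $\alpha^*$ but different $v_0^*$ produce different roots $g^*$. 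Thus there is no DMU-independent increasing map $\alpha^*\mapsto\delta^*$, and completing the argument would require that the curves $g\mapsto\Psi(g)$ of distinct test units never cross at the level $\Psi=g^2$. I do not expect this to hold: taking one unit in the decreasing-returns region (large $v_0^*>0$) and another in the increasing-returns region ($v_0^*<0$), one can arrange that the unit with the larger $\alpha^*$ has the smaller radius $\delta^*$, reversing the order. Hence the claim is provable for the linear model \nref{maxRobBccLp} precisely as above, but for the nonlinear model \nref{maxRobBcc} this monotonicity step is the genuine obstacle and, in fact, appears to fail.
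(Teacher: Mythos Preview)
Your argument for the linear model \nref{maxRobBccLp} is correct and is essentially the paper's approach (the paper only says ``the proof is analogous''): the substitution $\hat u=\tilde u/(1-\delta)$, $\hat v=\tilde v/(1-\delta)$, $\hat v_0=\tilde v_0/(1-\delta)$ leaves exactly the classical BCC feasible region together with the single scalar threshold $y_0^T\hat u-\hat v_0\ge\tfrac{1+\delta}{1-\delta}$, whence $\delta^*=\tfrac{\alpha^*-1}{\alpha^*+1}$ is a DMU-independent strictly increasing function of $\alpha^*$.

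For the nonlinear model \nref{maxRobBcc} you carry out precisely the same substitution as the paper (passing through \nref{maxRobBccTemp} and rescaling by $(1+\delta)^2$) and reach the same transformed constraint, which in the paper's form reads
\[
\frac{1-\delta}{1+\delta}\,y_0^T\hat u-\hat v_0\ \ge\ \frac{1+\delta}{1-\delta},
\]
with the remaining constraints being the classical BCC ones. At this point the paper simply asserts that the result follows because the right-hand side is increasing in $\delta$ and the coefficient of the nonnegative term $y_0^T\hat u$ is decreasing in $\delta$. You correctly observe that this monotonicity only shows that, for each fixed feasible $(\hat u,\hat v,\hat v_0)$, the set of admissible $\delta$'s is a lower interval; it does \emph{not} by itself yield that the maximal $\delta$ is a monotone function of $\alpha^*$. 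The reason, as you pinpoint, is the free intercept $\hat v_0$: after clearing $\delta$ from the feasible region, the remaining scalar inequality is $y_0^T\hat u-g\,\hat v_0\ge g^2$ with $g=\tfrac{1+\delta}{1-\delta}$, so the robust radius is governed by the whole map $g\mapsto\Psi(g)=\max_F\{y_0^T\hat u-g\,\hat v_0\}$, of which $\alpha^*=\Psi(1)$ is only one value. Concretely, two points $(a,b)=(y_0^T\hat u,\hat v_0)$ with the same $a-b$ but different $b$ give different thresholds (e.g.\ $(4,2)$ and $(2,0)$ both have $\alpha=2$, but their roots of $a/g-b=g$ are $\approx1.24$ and $\sqrt{2}\approx1.41$).

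So you have not missed a step that the paper supplies: the paper's proof for \nref{maxRobBcc} stops exactly at the monotonicity assertion you find insufficient, and your analysis exposes the gap rather than failing to reproduce a valid argument. Whether the ordering claim for \nref{maxRobBcc} is nonetheless true (so that the gap is curable) or actually false as you suspect would require either additional structure of the BCC feasible set beyond what the paper invokes, or an explicit counterexample; neither is provided in the paper.
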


\begin{proof}
Starting from \nref{maxRobBccTemp}, the model equivalently reads
\begin{align*}
\max\ \delta \st 
&\frac{1}{(1+\delta)^2}y_0^T\hat{u}-\frac{1}{1-\delta^2}\hat{v}_0
  \geq \frac{1}{(1-\delta)^2},\ 
 x_0^T\hat{v} \leq 1,\\ 
&Y\hat{u}-X\hat{v}-1\hat{v}_0\leq0,\ \hat{u},\hat{v},\hat{v}_0\geq0,
\end{align*}
using substitution $\hat{u}:=\tilde{u}(1+\delta)^2$, $\hat{v}:=\tilde{v}(1+\delta)^2$ and $\hat{v}_0:=\tilde{v}_0(1+\delta)^2$. Rewrite the problem as
\begin{subequations}
\begin{align}
\label{lpPfThmBccSort}
\max\ \delta \st 
&\frac{1-\delta}{1+\delta}y_0^T\hat{u}-\hat{v}_0
  \geq \frac{1+\delta}{1-\delta},\ 
 x_0^T\hat{v} \leq 1,\\ 
&Y\hat{u}-X\hat{v}-1\hat{v}_0\leq0,\ \hat{u},\hat{v},\hat{v}_0\geq0.
\end{align}
\end{subequations}
Notice that the BCC model \nref{lpBCC} is equivalent to
\begin{align*}
\max\ \alpha \st 
 y_0^Tu-v_0 \geq \alpha,\ x_0^Tv \leq 1,\ Yu-Xv-1v_0\leq0,\ u,v\geq0.
\end{align*}
Now, the statement follows from the fact that function $(1+\delta)(1-\delta)^{-1}$ is increasing on $\delta\in[-1,1)$ and the function $(1-\delta)(1+\delta)^{-1}$ is decreasing on $\delta\in(-1,1]$. The former is the right-hand side in \nref{lpPfThmBccSort}, and the latter is the coefficient by the non-negative term $y_0^T\hat{u}$ therein.

For \nref{maxRobBccLp} the proof is analogous.
\end{proof}

\section{Examples}

Rather than a case study, we present several illustrative examples. They show the particular properties of the proposed ranking as discussed above.

\begin{example}\label{ex2}
Consider the example from \cite[Table 1.5]{CooSei2007}. The data displayed in Table~\ref{tabEx2} record two inputs (doctors and nurses) and two outputs (outpatients and inpatients) for 12 hospitals. The last three columns show the classical efficiency and the novel efficiency based on the robust approach by using the linear programming approximation or the generalized linear fractional programming formulation, respectively
\begin{table}[t]
\caption{(Example~\ref{ex2}) DEA with hospital data and resulting efficiencies.\label{tabEx2}}
\begin{center}
\small\footnotesize
\begin{tabular}{@{}cccccccc@{}}
\toprule
DMU & doctors & nurses & outpatients & inpatients & classical eff. & new eff. by \nref{maxRobRankAppr} & new eff. by \nref{maxThmRobRank}\\
\midrule
A& 20&151 & 100&90 & 1      & 1.1696 & 1.1708 \\
B& 19&131 & 150&50 & 1      & 1.0843 & 1.0845 \\
C& 25&160 & 160&55 & 0.8827 & 0.9377 & 0.9376 \\
D& 27&168 & 180&72 & 1      & 1.0079 & 1.0079 \\
E& 22&158 &\ 94&66 & 0.7635 & 0.8659 & 0.8653 \\
F& 55&255 & 230&90 & 0.8348 & 0.9100 & 0.9097 \\
G& 33&235 & 220&88 & 0.9020 & 0.9485 & 0.9484 \\
H& 31&206 & 152&80 & 0.7963 & 0.8866 & 0.8863 \\
I& 30&244 & 190&100& 0.9604 & 0.9798 & 0.9798 \\
J& 50&268 & 250&100& 0.8707 & 0.9309 & 0.9307 \\
K& 53&306 & 260&147& 0.9551 & 0.9770 & 0.9770 \\
L& 38&284 & 250&120& 0.9582 & 0.9787 & 0.9787 \\
\bottomrule
\end{tabular}
\end{center}
\end{table}

The computed results confirm that the linear programming model \nref{maxRobRankAppr} is indeed a very good approximation of \nref{maxThmRobRank} with the highest error $0.2\%$ and the average error $0.025\%$. Therefore we can utilize it for computing the efficiencies instead.

We also see satisfaction of the properties of Propositions~\ref{propCompApprox} and~\ref{propCompOrig} that the classical method yields the same order of efficiencies as our approach. Moreover, we can interpret the efficiencies based on their robustness aspect. Thus we can claim that hospital A is very efficient since it remains efficient even for rather large variations of data  -- all data can arbitrarily and independently perturb up to $8.54\%$ of their nominal values while preserving efficiency. In contrast, hospital D is only little efficient and close to inefficiency -- only $0.4\%$ variations are admissible and higher variations of data may result in inefficiency. From the other side, hospitals E and H are quite inefficient as they remain inefficient for any perturbation of data up to about $6\%$, but hospitals I, K and L are nearly efficient -- they can achieve efficiency by a suitable $1.15\%$-variation of data.
\end{example}

\begin{example}\label{exInt}
Consider the example from \cite{EntMae2002,HeXu2016} with one-dimensional input and two-dimensional interval output for 10 DMU's. The data as well as the computed results by \cite{HeXu2016}  and by our approach are given in Table~\ref{tabExInt}.
\begin{table}[t]
\caption{(Example~\ref{exInt}) DEA with interval data.\label{tabExInt}}
\begin{center}
\small\footnotesize
\begin{tabular}{cccccc}
\toprule
DMU & $X_1$ & $Y_1$ & $Y_2$ & efficiency by \cite{HeXu2016} & novel efficiency\\
\midrule
A&1& [0.8,\ 1.2] & [7.50,\ 8.50] & [1,\,1] & [1.0169,\ 1.1148] \\
B&1& [1.8,\ 2.2] & [2.50,\ 3.50] & [0.4222,\ 0.6227] & [0.5937,\ 0.7675]  \\
C&1& [1.6,\ 2.4] & [5.75,\ 6.25] & [0.7297,\ 0.9167] & [0.8437,\ 0.9566]  \\
D&1& [2.5,\ 3.5] & [2.75,\ 3.25] & [ 0.5247,\ 0.7809] & [0.6882,\ 0.8770]  \\
E&1& [2.8,\ 3.2] & [6.75,\ 7.25] & [0.9646,\ 1] & [0.9819,\ 1.1292]  \\
F&1& [3.8,\ 4.2] & [1.83,\ 2.17] & [0.6131,\ 0.7806] & [0.7601,\ 0.8768]  \\
G&1& [3.4,\ 4.6] & [4.50,\ 5.50] & [0.7940,\ 1] & [0.8852,\ 1.0643]  \\
H&1& [4.7,\ 5.3] & [1.50,\ 2.50] & [0.6984,\ 0.9635] & [0.8224,\ 0.9814]  \\
I&1& [5.6,\ 6.4] & [1.67,\ 2.33] & [0.8229,\ 1] & [0.9028,\ 1.0482]  \\
J&1& [6.7,\ 7.3] & [0.75,\ 1.25] & [1,\ 1] & [1.0229,\ 1.1318]  \\
\bottomrule
\end{tabular}
\end{center}
\end{table}

We see that DMU's A and J are efficient for each realization, whereas B, C, D, F, and H are inefficient for each realization; the others may or may not be efficient. This conclusion is the same as observed in \cite{HeXu2016}. Nevertheless, in comparison to \cite{HeXu2016}, informative value of our ranking is higher. First, we measure the degree of efficiency instead of ranking them by 1. Second, the ranking measures distance to (in)efficiency. Thus we see, for example, B or F are far to efficiency while H is possibly closer. We can also claim that E is either efficient or very close to efficiency for each realization.
\end{example}

\begin{example}\label{exBCC}
Consider Example 4.1 from \cite{CooSei2007} with one-dimensional input and output for 8 DMU's. The data as well as the computed results by the linear model \nref{maxRobBccLp} are displayed in Table~\ref{tabExBCC}.
\begin{table}[t]
\caption{(Example~\ref{exBCC}) BCC model.\label{tabExBCC}}
\begin{center}
\small\footnotesize
\begin{tabular}{cccccc}
\toprule
DMU & input & output & classical BCC & robust BCC\\
\midrule
A & 2 & 1 & 1    & 1.00 \\
B & 3 & 3 & 1    & 1.05 \\
C & 2 & 2 & 1    & 1.11 \\
D & 4 & 3 & 0.75 & 0.86 \\
E & 6 & 5 & 1    & 1.14 \\
F & 5 & 2 & 0.40 & 0.57 \\
G & 6 & 3 & 0.50 & 0.67 \\
H & 8 & 5 & 0.75 & 0.86 \\
\bottomrule
\end{tabular}
\end{center}
\end{table}
\begin{center}
\psset{unit=4.85ex,arrowscale=1.5}
\begin{pspicture}(-0.5,-0.5)(11,6.5)
\newgray{mygray}{0.9}
\psaxes[ticksize=2pt,labels=all,ticks=all]
{->}(0,0)(-0.52,-0.52)(9.9,6)
\uput[-90](9.9,0){$x$}
\uput[180](0,6){$y$}
\qdisk(2,1){2pt}\uput[45](2,1){$A$}
\qdisk(3,3){2pt}\uput[95](3,3){$B$}
\qdisk(2,2){2pt}\uput[95](2,2){$C$}
\qdisk(4,3){2pt}\uput[45](4,3){$D$}
\qdisk(6,5){2pt}\uput[45](6,5){$E$}
\qdisk(5,2){2pt}\uput[45](5,2){$F$}
\qdisk(6,3){2pt}\uput[45](6,3){$G$}
\qdisk(8,5){2pt}\uput[45](8,5){$H$}
\psline[linewidth=0.8pt](2,1)(2,2)(3,3)(6,5)
\end{pspicture}
\end{center}
We see that the efficient units are the same for the classical BCC model and for our robust counterpart. Nevertheless, our model provides more information. We see directly that the unit A is not stable and arbitrarily small perturbation makes it inefficient. We also see that the units C and E are the most stable ones and they stay efficient even for more than $5\%$ variation of data. On the other hand, DMU F is the most inefficient and it remains inefficient for arbitrary perturbation of the data up to $21\%$.
\end{example}

\section{Conclusion}

We proposed a new DEA ranking that is based on robustness of DMU's of their (in)efficiency. This ranking has many attractive properties. As the classical DEA ranking, it is computable by means of linear programming, it is invariant with respect to scaling, and it gives a measure of efficiency as a distance to inefficiency and vice versa. In addition, the novel approach is naturally normalized, so it is suitable as a universal ranking technique of DMU's of possibly completely different models. It is also suitable for further generalization. We discussed models with interval data as an important extension of the classical real-valued problems, and also an adaptation to the BCC model.

We presented several examples that confirm interpretability of the novel efficiency ranking as well as applicability for model with real or interval data.
Attractivity of our approach is confirmed by an early application by Hol\'{y} \& \v{S}afr \cite{HolSaf2017}.

\subsubsection*{Acknowledgments.} 
The author wishes to thank the fruitful discussion at DEA 2017 conference in Prague.


\bibliographystyle{abbrv}
\bibliography{rob_dea}

\end{document}